\documentclass{article}
\usepackage{amsthm}

\usepackage{amssymb}
\usepackage{amsfonts}
\usepackage{amsmath}
\usepackage{graphicx}
\usepackage{mathrsfs}

\setcounter{MaxMatrixCols}{10}
\newtheorem{theorem}{Theorem}

\newtheorem{corollary}[theorem]{Corollary}

\newtheorem{lemma}[theorem]{Lemma}

\begin{document}

\title{Limit theorems for pure death processes coming down from infinity   
}
\author{Serik Sagitov\footnote{Chalmers
University and University of Gothenburg, 412 96 Gothenburg, Sweden. Email address: serik@chalmers.se}  \quad and \quad  Thibaut France\footnote{\'Ecole Polytechnique, route de Saclay, 91128 Palaiseau Cedex-France; Email address: thibaut.france@polytechnique.edu}}

\maketitle

\begin{abstract}
We consider a pure death process $(Z(t), t\ge0)$ with death rates $\lambda_n$
satisfying the condition $\sum_{n=2}^\infty \lambda_n^{-1}<\infty$ of coming from infinity, $Z(0)=\infty$, down to an absorbing state $n=1$. 
We establish limit theorems for $Z(t)$ as $t\to0$, which strengthen the results that can be extracted from 
\cite{BMR}. 
We also prove a large deviation theorem assuming that $\lambda_n$ regularly vary as $n\to\infty$ with an index $ \beta>1$. It generalises a similar statement  with $\beta=2$ obtained in \cite{DPS} for 
 $\lambda_n={n\choose 2}$.
\end{abstract}

\emph{Key words}: Almost sure convergence, large deviations, Kingman's coalescent.

\section{Introduction}\label{intro}
The number of lineages in the Kingman coalescent \cite{King} instantaneously comes down from infinity
by jumps $n\to n-1$ at rate $\lambda_n={n\choose 2}$.
As a natural extention of the Kingman setting,  
we consider  a pure death process $(Z(t), t\ge0)$ with an absorbing state $n=1$, described by a sequence of death rates $(\lambda_n, n\ge 2)$ such that
\begin{equation}\label{c1}
\sum_{n=2}^\infty \lambda_n^{-1}<\infty.
\end{equation}
Assume that $Z(0)=\infty$ and denote by $T_n$ the first time when $Z(t)$ hits a given state $n\ge1$. Clearly,
$$T_n=X_{n+1}+X_{n+2}+\ldots,$$ 
where  $X_2, X_3,\ldots$ are independent exponentially distributed  holding times with  $\mathbb EX_i=\lambda_i^{-1}$. Under condition \eqref{c1}, the mean value of the hitting time $T_n$
\begin{equation}\label{me}
A_n=\mathbb ET_n=\sum_{i=n+1}^{\infty} \lambda_i^{-1}
\end{equation}
 is finite, and $A_n\to0$ as $n\to\infty$. 
 Thus the  process instantaneously comes down from infinity, in that  $\mathbb P(Z(t)=\infty|Z(0)=\infty)=0$ for any $t>0$. 
 
 In this paper we are interested in the asymptotic properties of $Z(t)$ as $t\to0$. In view of the relation $\{Z(t)> n\}=\{T_n> t\}$,  the  step function 
 $$v(t)=\sum_{n=2}^\infty n1_{[A_n,A_{n-1})}(t)+1_{[A_1,\infty)}(t),$$
being a generalised inverse of the sequence $(A_n)
$, gives  the {\it speed of coming down from infinity} for the process $Z(t)$, cf  \cite{BBL}. 
Recall that for the Kingman coalescent, $A_n=2/(n+1)$ and $v(t)\sim 2/t$ as $t\to0$. 

Our main results are presented in Sections \ref{Rgr} and \ref{MR}. Section \ref{Rgr} contains two comprehensive limit theorems. 
Theorem \ref{l1}, dealing with $T_n$, can be deduced from more general results recently obtained in \cite{BMR} for birth-death processes, however, our specialised proofs are more direct. Theorem \ref{ThZ}, dealing with $Z(t)$, improves the conditions for the laws of large numbers and the central limit theorem compared to their counterparts given in \cite{BMR}. In particular, our Theorem \ref{ThZ}  (i)  states that ${Z(t)/v(t)}\to1$ in probability as $t\to0$ under a very mild restriction
 \begin{align}
\limsup_{n\to\infty}{A_{nx}\over A_{n}}<1,\quad \mbox{for all } x>1. \label{Rxr}
\end{align}
\begin{quote}
{\it Notational agreement}: whenever in place of an integer index we put a non-integer number, say $u$, we mean that the actual index is ${\lfloor u\rfloor}$, so that  $A_{nx}:=A_{\lfloor nx\rfloor}$.
\end{quote}
In Section  \ref{uoc} we give a number of examples illustrating a wide range of possible growth patterns  covered by Theorem \ref{ThZ} for the speed function $v(t)$ as $t\to0$.
Section \ref{MR} presents an explicit  large deviation theorem generalizing a recent result in \cite{DPS} obtained for the Kingman coalescent. The remaining sections are devoted to self-contained  proofs.

Notice that our results can be also interpreted in terms of an explosive pure birth process  $N(u)=Z(T_1-u)$ obtained from the pure death process $(Z(t), 0<t\le T_1)$ by time reversing. The time-reversed process $N(u)$ can be viewed as a simple model for the number  of neutrons at time $u$  in a nuclear chain reaction  exploding at the finite random time  $T_1$, see \cite{Pa} and  \cite{WW}. 
Knowing the speed of explosion $v(t)$ and the current population size  $N(u)$, one can hope to predict the time $t=T_1-u$  left to the explosion event, cf  \cite{SS}.

\section{Limit theorems for $T_n$ and  $Z(t)$}\label{Rgr}

Recall \eqref{me} and put
\[B_n^2={\rm Var } (T_n)=\sum_{i=n+1}^{\infty} \lambda_i^{-2}, \qquad C_n^3=\sum_{i=n+1}^{\infty} \lambda_i^{-3}.\]

\begin{theorem} \label{l1}
Consider a pure death process with parameters $(\lambda_n)$ satisfying condition \eqref{c1}. 

(i) If  \begin{equation}\label{c2}
\lambda_n/\lambda_{n+1}\to\alpha\in[0,1),\quad n\to\infty,
\end{equation}
 then for each fixed $x\ge0$,
$$\mathbb P(A_n^{-1}T_n\le x)\to F_\alpha(x),\quad n\to\infty,$$ 
where the limit distribution has Laplace transform
\[\int_0^\infty e^{-ux}dF_\alpha(x)= \prod_{i\ge 0}{1\over u\alpha^i(1-\alpha)+1}. \]

(ii)  $A_n^{-1}T_n\to1$ in probability as $n\to\infty$, if
\begin{equation}
 \label{BoA}
  \lambda_n ^{-1}=o(A_n),\quad n\to\infty.
\end{equation}

(iii) $A_n^{-1}T_n\to1$  almost surely as $n\to\infty$, if
\begin{equation}
  \label{cond1}
  \sum_{i=1}^{\infty} (\lambda_{i+1}A_i)^{-2} < \infty.
\end{equation}

(iv) If \eqref{BoA} holds and furthermore
\begin{equation}
  \label{cond2}
  C_n =o(B_n),\quad n\to\infty,
\end{equation}
then for all $x\in(-\infty,\infty)$,
\[\mathbb P\Big({T_n-A_n\over B_n}\le x\Big)\to \Phi(x),\quad n\to\infty,\]
where $\Phi(x)$ is the standard normal distribution function.

\end{theorem}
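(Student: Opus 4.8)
The plan is to prove this central limit theorem directly through characteristic functions, taking advantage of the fact that $T_n=\sum_{i>n}X_i$ is an infinite sum of \emph{independent} exponential variables with an explicit transform, in the same spirit as the Laplace-transform computation behind part (i). Writing $W_n=(T_n-A_n)/B_n$ and recalling that the characteristic function of $X_i$ is $(1-is/\lambda_i)^{-1}$, I would start from
\[
\mathbb E\bigl[e^{isW_n}\bigr]=e^{-isA_n/B_n}\prod_{i>n}\frac{1}{1-i\theta_{n,i}},\qquad \theta_{n,i}:=\frac{s}{\lambda_iB_n}.
\]
The infinite product converges absolutely, since $\sum_{i>n}|\theta_{n,i}|=|s|A_n/B_n<\infty$ under \eqref{c1}, so I may pass to the principal branch of the logarithm and analyse $\log\mathbb E[e^{isW_n}]=-isA_n/B_n-\sum_{i>n}\log(1-i\theta_{n,i})$.

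The decisive preliminary observation is the \emph{uniform} smallness $\sup_{i>n}|\theta_{n,i}|\to0$. This comes straight from \eqref{cond2}: for every $i>n$ the single term $\lambda_i^{-3}$ is dominated by $C_n^3$, whence $\lambda_i^{-1}\le C_n$ and $|\theta_{n,i}|\le|s|\,C_n/B_n\to0$. With every $\theta_{n,i}$ eventually below $\tfrac12$ in modulus, the expansion $-\log(1-z)=z+\tfrac12 z^2+r(z)$ with $|r(z)|\le c|z|^3$ applies term by term. Summing the three contributions over $i>n$, the first-order term equals $isA_n/B_n$ and cancels the standardizing factor, the second-order term gives $-\tfrac12 s^2B_n^{-2}\sum_{i>n}\lambda_i^{-2}=-\tfrac12 s^2$, and the remainder is bounded by
\[
\sum_{i>n}\bigl|r(i\theta_{n,i})\bigr|\le c\,|s|^3\,\frac{\sum_{i>n}\lambda_i^{-3}}{B_n^3}=c\,|s|^3\Bigl(\frac{C_n}{B_n}\Bigr)^3\longrightarrow0,
\]
again by \eqref{cond2}. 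Hence $\log\mathbb E[e^{isW_n}]\to-\tfrac12 s^2$ for each real $s$, and L\'evy's continuity theorem yields $W_n\Rightarrow N(0,1)$; since $\Phi$ is continuous, this is exactly the pointwise convergence of distribution functions asserted.

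The main obstacle is purely the bookkeeping for an \emph{infinite} sum: one must check that the series of logarithms converges and, more importantly, that the quadratic Taylor remainder bound holds uniformly over the infinitely many indices $i>n$. Both are secured by the single estimate $|\theta_{n,i}|\le|s|C_n/B_n$, so condition \eqref{cond2} is carrying all of the analytic weight. Conceptually the argument is just the Lyapunov central limit theorem, the Lyapunov ratio being a constant multiple of $(C_n/B_n)^3$ because the centered third absolute moment of an $\mathrm{Exp}(\lambda_i)$ variable equals a universal constant times $\lambda_i^{-3}$. (Condition \eqref{BoA} is carried over from part (ii) so that (iv) refines the weak law of large numbers; the distributional limit itself is produced entirely by \eqref{cond2}.) An alternative route would truncate $T_n$ at a finite level, apply the classical Lyapunov theorem to the finite sum, and then send the truncation level to infinity using that the discarded tail has variance $o(B_n^2)$, but this double limit is avoided by the direct transform computation.
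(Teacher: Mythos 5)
Your proposal addresses only part (iv); parts (i), (ii) and (iii) of the theorem are not touched, so as a proof of the stated theorem it is incomplete. Each of the omitted parts needs its own ingredient that does not appear anywhere in your write-up: part (i) requires computing the limit of the Laplace transform $\mathbb E e^{-uT_n/A_n}=\prod_{k>n}\big(1+u(\lambda_kA_n)^{-1}\big)^{-1}$ via the consequence $(\lambda_{n+i}A_n)^{-1}\to\alpha^{i-1}(1-\alpha)$ of \eqref{c2}; part (ii) is a Chebyshev argument resting on the equivalence of \eqref{BoA} with $B_n=o(A_n)$; and part (iii) needs a maximal inequality for the tail sums $T_k-A_k$ (the paper's Lemma \ref{leT}, built on a backward-martingale form of Kolmogorov's inequality), since almost sure convergence cannot be extracted from the one-$n$-at-a-time transform estimates you employ.

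For part (iv) itself your argument is correct and is essentially the paper's: both expand the logarithm of the transform of the standardized sum, cancel the first-order term against the centering, identify the second-order term as $s^2/2$ (resp.\ $u^2/2$), and kill the cubic remainder by $(C_n/B_n)^3\to0$, which is precisely the Lyapunov ratio and precisely condition \eqref{cond2}. The differences are cosmetic but mildly in your favour: you use characteristic functions rather than the moment generating function, which avoids having to check that the MGF is finite near the origin for large $n$, and you make explicit the uniform bound $\sup_{i>n}|\theta_{n,i}|\le |s|\,C_n/B_n\to0$ that licenses the termwise Taylor expansion, a point the paper leaves implicit. Your parenthetical remark that \eqref{BoA} is not needed for the distributional limit is also correct: $\lambda_{n+1}^{-1}\le C_n=o(B_n)$ together with $B_n\le A_n$ already yields \eqref{BoA}, so in (iv) that hypothesis is redundant.
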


\noindent{\bf Remarks 1-5}

1. Condition  \eqref{c2} implies 
\[ \lambda_{n+1}A_n\to1+\alpha+\alpha^2+\ldots={1\over1-\alpha}, \]
yielding 
\begin{equation}\label{lambdaA} 
(\lambda_{n+i}A_{n})^{-1}\to \alpha^{i-1}(1-\alpha), \quad i\ge1.
\end{equation}
It is equivalent to the  condition
\[(\lambda_{n+1}A_n)^{-1}\to\tilde \alpha:=1-\alpha\in[0,1),\]
which in \cite{BMR} is used to define the "fast regime" of coming down from infinity. The reverse part of this equivalence is seen from the recursion
\[\lambda_{n+1}A_n=1+{\lambda_{n+1}\over\lambda_{n+2}}(\lambda_{n+2}A_{n+1}).\]

2. Condition \eqref{c2} implies  $A_{n+1}/A_n\to\alpha$, and therefore,
\begin{equation}\label{a11} 
A_{nx}=o(A_n), \quad\mbox{for all } x>1.
\end{equation}

3. Condition \eqref{c1} together with 
\begin{equation}\label{a1} 
\lambda_n/\lambda_{n+1}\to1,\quad n\to\infty,
\end{equation}
imply condition  \eqref{BoA}.

4. Condition \eqref{BoA} is equivalent to
\begin{equation}
 \label{BoAn}
 B_n =o(A_n), \quad n\to\infty.
\end{equation}
To verify this, let us fix an arbitrary  $\epsilon\in(0,1)$. If \eqref{BoAn} holds, then for sufficiently large $n$,
\[   \lambda_{n+1}^{-2}\le B_n^2\le \epsilon^2 A_n^2,
\]
so that $\lambda_{n+1}^{-1}\le \epsilon(\lambda_{n+1}^{-1}+A_{n+1})$ and $\lambda_{n+1}^{-1}\le \epsilon(1-\epsilon)^{-1}A_{n+1}$, which  implies  \eqref{BoA}. On the other hand, given  \eqref{BoA},
\[  A_n^{2}-B_n^2=2\sum_{i=n+1}^{\infty}\sum_{j=i+1}^{\infty} \lambda_i^{-1}\lambda_j^{-1}=2\sum_{i=n+1}^{\infty}\lambda_i^{-1}A_i \ge \epsilon^{-1}B_n^2,
\]
for all  sufficiently large $n$, which yields \eqref{BoAn}.

5. Condition  \eqref{cond1} implies \eqref{BoAn} due to the inequality
\[  A_n^2\sum_{i=n}^{\infty} (\lambda_{i+1}A_i)^{-2} \ge B_n^2.
\]


\begin{theorem} \label{ThZ}
Consider a pure death process with parameters $(\lambda_n)$ satisfying conditions \eqref{c1} and \eqref{Rxr}.

(i) $Z(t)/v(t)\to1$  in probability as $t\to0$.

(ii) $Z(t)/v(t)\to1$ almost surely, if 
for each $\epsilon\in(0,1)$,
\begin{equation}
  \label{cond1e}
  \sum_{i=1}^{\infty} (\lambda_{i+1}A_{i(1-\epsilon)})^{-2} < \infty.
\end{equation} 

(iii) If 
 condition \eqref{c2} holds, then for each $k=0,\pm1,\pm2,\ldots$, 
$$\mathbb P(Z(A_n)\le n+k)\to F_\alpha(\alpha^{-k}),\quad n\to\infty.$$ 

(iv) Let \eqref{BoA} and \eqref{cond2} hold. If $b_n=o(n)$ is such that  for all $x\in(-\infty,\infty)$,
\[{A_n-A_{n+xb_n}\over B_{n+xb_n}}\to h(x),\quad n\to\infty,\]
then
\[\mathbb P\Big({Z(t)-v(t)\over b_{v(t)}}\le x\Big)\to \Phi(h(x)),\quad t\to0.\]

\end{theorem}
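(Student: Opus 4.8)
The plan is to transfer the central limit theorem for the hitting times $T_n$ furnished by Theorem~\ref{l1}(iv) to the process $Z(t)$ by means of the duality $\{Z(t)>n\}=\{T_n>t\}$. Fix $x\in(-\infty,\infty)$ and, for $t>0$, put $m=v(t)$ and $n=\lfloor m+xb_m\rfloor$. Since $A_m\to0$ forces $m=v(t)\to\infty$ as $t\to0$, both $m$ and $n$ tend to infinity. Because $Z(t)$ is integer valued and $v(t)$ is an integer, the event $\{(Z(t)-v(t))/b_{v(t)}\le x\}$ coincides with $\{Z(t)\le n\}$, and duality rewrites it as $\{T_n\le t\}$. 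Hence
\[\mathbb P\Big({Z(t)-v(t)\over b_{v(t)}}\le x\Big)=\mathbb P(T_n\le t)=\mathbb P\Big({T_n-A_n\over B_n}\le {t-A_n\over B_n}\Big).\]

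Next I would invoke Theorem~\ref{l1}(iv): under \eqref{BoA} and \eqref{cond2} the distribution function of $(T_n-A_n)/B_n$ converges pointwise to the continuous limit $\Phi$, so by P\'olya's theorem the convergence is in fact uniform. Consequently, if the moving threshold satisfies $(t-A_n)/B_n\to h(x)$ along $t\to0$, then $\mathbb P(T_n\le t)\to\Phi(h(x))$, which is the desired conclusion. It therefore remains to analyse the threshold, which I split as
\[{t-A_n\over B_n}={t-A_m\over B_n}+{A_m-A_n\over B_n}.\]
By the defining hypothesis on $b_n$, applied along $m=v(t)\to\infty$, the second summand equals $(A_m-A_{m+xb_m})/B_{m+xb_m}$ and hence tends to $h(x)$.

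The crux is to show that the overshoot term $(t-A_m)/B_n$ is negligible. Since $v(t)=m$ means $t\in[A_m,A_{m-1})$, one has $0\le t-A_m< A_{m-1}-A_m=\lambda_m^{-1}$, so it suffices to prove $\lambda_m^{-1}=o(B_n)$. The Lyapunov-type condition \eqref{cond2} gives $\lambda_m^{-1}\le C_{m-1}=o(B_{m-1})$, and the one-step identity $B_{m-1}^2-B_m^2=\lambda_m^{-2}$ then yields $B_m/B_{m-1}\to1$. I expect the main obstacle to be upgrading this to $B_n/B_m\to1$ (equivalently, boundedness of $B_{m-1}/B_n$) across the whole window $n=m+xb_m$ with $b_m=o(m)$: for $x\le0$ this is automatic since $B$ is nonincreasing, but for $x>0$ the product of the consecutive near-unit ratios over $\asymp b_m\to\infty$ indices must be prevented from drifting away from $1$, which is precisely where the regularity implicit in the hypothesis on $b_n$—or an auxiliary regular-variation assumption on $B_n$, under which $B_{m+xb_m}/B_m\to1$ follows at once from $b_m/m\to0$—has to be used. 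Once $\lambda_m^{-1}=o(B_n)$ is secured, the overshoot vanishes, the threshold converges to $h(x)$, and the uniform CLT delivers the limit $\Phi(h(x))$.
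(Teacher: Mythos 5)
Your proposal addresses only part (iv) of the theorem; parts (i), (ii) and (iii) are not touched at all, and none of them is a corollary of (iv). In the paper, (i) is proved by a Chernoff-type bound on $\mathbb P(T_{n(1+\epsilon)}>A_n)$ that exploits \eqref{Rxr} through $A_{n(1+\epsilon)}/A_n<\delta<1$, (ii) rests on a maximal inequality for the tail sums $T_k-A_k$ (Lemma \ref{leT}, built on a Kolmogorov inequality for the backward martingale $T_n-A_n$), and (iii) follows from Theorem \ref{l1}(i) via $\mathbb P(Z(A_n)>n+k)=\mathbb P\big(T_{n+k}/A_{n+k}>A_n/A_{n+k}\big)$ together with $A_n/A_{n+k}\to\alpha^{-k}$. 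As written, the proposal therefore proves at most a quarter of the statement.

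For part (iv) itself your route coincides with the paper's (duality $\{Z(t)>n\}=\{T_n>t\}$ plus Theorem \ref{l1}(iv) and P\'olya's theorem), and you are in fact more careful than the paper, which verifies the claim only along $t=A_n$ and ignores the overshoot $t-A_{v(t)}$. But you leave the key step $\lambda_m^{-1}=o(B_{m+xb_m})$ for $x>0$ unproved and speculate that an extra regular-variation hypothesis might be needed. It is not: for $i>m$ one has $\lambda_i^{-1}\le C_m=o(B_m)$ by \eqref{cond2}, hence with $n=m+xb_m$,
\[B_m^2-B_n^2=\sum_{i=m+1}^{n}\lambda_i^{-2}\le C_m\sum_{i=m+1}^{n}\lambda_i^{-1}=C_m(A_m-A_n)=o(B_m)\,O(B_n),\]
where $A_m-A_n=O(B_n)$ because $(A_m-A_n)/B_n\to h(x)$ by hypothesis. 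Writing $r=B_n/B_m\le1$ this reads $1-r^2=o(r)$, which forces $r\to1$, so $\lambda_m^{-1}=o(B_m)=o(B_n)$ and the overshoot term vanishes. With that supplement your argument for (iv) closes, but the proposal as a whole still fails to establish parts (i)--(iii).
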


An important class of the pure death processes coming down from infinity is set out by the constraint
\begin{equation}\label{crv}
 \lambda_n= n^{\beta}L(n),\quad \beta> 1,
\end{equation}
where the function $L:[1,\infty)\to(0,\infty)$ is assumed to slowly vary at infinity. For the Kingman coalescent, this condition holds with $\beta=2$. By the properties of regularly varying functions, see \cite{BGT}, condition \eqref{crv} entails
$$A_n=n^{1-\beta}L_1(n),\quad L_1(n)\sim (\beta-1)^{-1}L^{-1}(n),\quad n\to\infty,$$
implying that $v(t)$ regularly varies at zero with index ${1\over 1-\beta}$. In this case, condition  \eqref{Rxr} holds but not  \eqref{a11}. The following statement is easily obtained from parts (ii) and (iv) of Theorem \ref{ThZ}.

\begin{corollary}\label{Co}
 If  condition \eqref{crv} holds,
 then $Z(t)/v(t)\to1$ almost surely  and the limit distribution of ${Z(t)-v(t)\over \sqrt{v(t)}}$ is normal with mean zero and variance ${1\over2\beta-1}$.

\end{corollary}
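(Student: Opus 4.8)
The plan is to verify the hypotheses of parts (ii) and (iv) of Theorem~\ref{ThZ} under the regular-variation assumption \eqref{crv}, extracting all the required asymptotics from Karamata's theorem. The standing conditions \eqref{c1} and \eqref{Rxr} of Theorem~\ref{ThZ} already hold under \eqref{crv}, as noted just before the corollary. Throughout I would use the stated consequences of \eqref{crv}, namely $A_n=n^{1-\beta}L_1(n)$ with $L_1(n)\sim(\beta-1)^{-1}L^{-1}(n)$, together with the fact that $L$ slowly varying forces $L(cn)\sim L(n)$ and $L_1(cn)\sim L_1(n)$ for every fixed $c>0$.

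For the almost sure convergence I would check condition \eqref{cond1e}. Writing $\lambda_{i+1}\sim i^\beta L(i)$ and $A_{i(1-\epsilon)}\sim (1-\epsilon)^{1-\beta}(\beta-1)^{-1} i^{1-\beta}L^{-1}(i)$, the slowly varying factors cancel and one obtains $\lambda_{i+1}A_{i(1-\epsilon)}\sim (1-\epsilon)^{1-\beta}(\beta-1)^{-1}\,i$, so that $(\lambda_{i+1}A_{i(1-\epsilon)})^{-2}$ is of order $i^{-2}$ and the series converges for every $\epsilon\in(0,1)$. Part~(ii) then yields $Z(t)/v(t)\to1$ almost surely.

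For the central limit statement I would first confirm the hypotheses \eqref{BoA} and \eqref{cond2} of part~(iv). Condition \eqref{BoA} is immediate since $\lambda_n^{-1}/A_n\sim(\beta-1)/n\to0$. For \eqref{cond2} I would apply Karamata to the tail sums defining $B_n$ and $C_n$: since $2\beta,3\beta>1$, one has $B_n^2\sim (2\beta-1)^{-1}n^{1-2\beta}L^{-2}(n)$ and $C_n^3\sim(3\beta-1)^{-1}n^{1-3\beta}L^{-3}(n)$, whence $C_n/B_n$ is of order $n^{-1/6}\to0$. It then remains to identify the scaling $b_n$ and the limit $h$. I would take $b_n=\sqrt n=o(n)$, so that $b_{v(t)}=\sqrt{v(t)}$. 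For $x>0$ the increment $A_n-A_{n+x\sqrt n}=\sum_{i=n+1}^{n+x\sqrt n}\lambda_i^{-1}$ contains $\sim x\sqrt n$ terms each $\sim n^{-\beta}L^{-1}(n)$, giving $A_n-A_{n+x\sqrt n}\sim x\,n^{1/2-\beta}L^{-1}(n)$; dividing by $B_{n+x\sqrt n}\sim(2\beta-1)^{-1/2}n^{1/2-\beta}L^{-1}(n)$ produces $h(x)=x\sqrt{2\beta-1}$ (the case $x<0$ gives the same constant by the symmetric computation). Part~(iv) then yields
\[\mathbb P\Big({Z(t)-v(t)\over\sqrt{v(t)}}\le x\Big)\to\Phi\big(x\sqrt{2\beta-1}\big),\]
which is exactly the distribution function of a centred normal with variance $(2\beta-1)^{-1}$.

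The bulk of the work is routine application of Karamata's theorem; the one point requiring care is the increment asymptotics for $A_n-A_{n+x\sqrt n}$, where each summand $\lambda_i^{-1}$ over a block of width $\sim\sqrt n$ is replaced by its value at $i=n$. This is legitimate because the block width is $o(n)$ and $\lambda_i^{-1}$ is regularly varying, so $\lambda_i^{-1}/\lambda_n^{-1}\to1$ uniformly across the block; the same observation supplies the uniformity in $x$ needed to upgrade fixed-$x$ convergence to the full distributional statement.
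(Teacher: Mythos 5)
Your proposal is correct and follows exactly the route the paper intends: the paper offers no argument beyond the remark that the corollary ``is easily obtained from parts (ii) and (iv) of Theorem~\ref{ThZ}'', and your Karamata-based verification of \eqref{cond1e}, \eqref{BoA} and \eqref{cond2}, together with the choice $b_n=\sqrt n$ and the computation $h(x)=x\sqrt{2\beta-1}$ (including the uniformity over the block of width $O(\sqrt n)$), supplies precisely the details the authors leave to the reader.
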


\noindent{\bf Remarks 6-9}

6. Parts (i) and (ii) of Theorem \ref{ThZ} should be compared to the pure death case of Theorems 4.3 and 4.4 in \cite{BMR}. Our laws of large numbers are stated under much weaker conditions. Notice that  \eqref{cond1} implies \eqref{cond1e}.

7. Part (iii) has no counterpart in \cite{BMR}.

8. Part (iv) should be compared to  the pure death case of Proposition 4.6 in \cite{BMR}. 

9. Corollary \ref{Co} should be compared to Theorem 5.1 in \cite{BMR}.

\section{Examples}\label{uoc}

Below we give five simple examples illustrating the wide range of regimes covered by Theorem \ref{ThZ}. For all our examples, the key condition \eqref{c1} is easily verified.
Paradoxically, the faster is the  decay of $A_n$ as $n\to\infty$, the slower is the speed of coming down from infinity.

1. Let  $A_n=(\log n)^{-a}$ for some $a>0$. Then, as $n\to\infty$,
$$\lambda_n\sim a^{-1} n(\log n)^{1+a},\quad B_n\sim a^{-1}n^{-1/2}(\log n)^{-1-a},\quad C_n\sim a^{-1}n^{-2/3}(\log n)^{-1-a}.$$
In this case  conditions \eqref{BoA}, \eqref{cond1}, \eqref{cond2},  \eqref{a1} hold, and
 \[ v(t)\sim  \exp\{t^{-1/a}\},\quad t\to0.\]
However, in this case condition \eqref{Rxr} is not valid and Theorem \ref{ThZ} can not be applied.

2. If  $A_n\sim cn^{1-\beta}$ for some $\beta>1$ and $c>0$, then condition \eqref{crv} is valid and the speed function
 \[ v(t)\sim  c^{-{1 \over \beta- 1 }}t^{{1 \over \beta- 1 }},\quad t\to0,\]
 suggests polynomial growth.
 This holds in particular, if $\lambda_n={2n\choose 3}$, with $\beta=2$. In this case, the process $2Z(t)$ describes a triple-wise coalescent  (in contrast to the pair-wise Kingman coalescent). 
 
 3.  If  $A_n=e^{-n^{\rho}}$ for some $\rho\in(0,1)$, then 
 \[ v(t)\sim  (\log t^{-1})^{1/\rho},\quad t\to0.\]
 In this case, both \eqref{a1} and \eqref{a11} are valid. Observe that for $\rho\in[{1\over 2},1)$, condition  \eqref{cond1e} holds for all $\epsilon\in(0,1)$ while condition \eqref{cond1} is not satisfied. 
 
 4. Turning to the Example 2 from Section 3.3 in \cite{BMR},  put $A_n=e^{-n/\log n}$. It was shown that in this case, $A_n^{-1}T_n\to1$ in probability, but not almost surely. For this example, the speed function has the following asymptotics
 \[ v(t)\sim  (\log t^{-1})(\log\log  t^{-1}),\quad t\to0.\]
Here, condition \eqref{a11} is satisfied together with condition  \eqref{cond1e}, thus by Theorem \ref{ThZ} (ii), we have almost sure convergence $Z(t)/v(t)\to1$.
 
5.  If  $A_n=e^{-n}$, then 
the fast decay of $A_n$ ensures that condition \eqref{c2} is satisfied with $\alpha=1/e$, and 
we have almost sure convergence $Z(t)/v(t)\to1$ with
\[ v(t)\sim  \log t^{-1},\quad t\to0.\]
For this example, condition \eqref{cond2} fails and the statements on the central limit theorem does not apply.

\section{Theorems on large deviations}\label{MR}

Consider a pure death processes whose rates   regularly vary with index $\beta>1$ satisfying condition
 \eqref{crv}.
For a given $x>0$, define $\tau=\tau(x)$ as a solution of the equation
 \begin{equation*}
     \int_{1}^{\infty}    {dy\over(\beta -1)^{-1}y^{\beta }-\tau}=x.
  \end{equation*}
  Observe that $\tau(x)$ is a strongly increasing function with 
  $$\lim_{x\to0}\tau(x)=-\infty,\quad \tau(1)=0,\quad \lim_{x\to\infty}\tau(x)=(\beta-1)^{-1}.$$
  Define two families of functions by
  \begin{align*}
 I(x)&=-(\beta-1)x\tau(x)- \ln(1-(\beta -1) \tau(x)),
\end{align*}
and $J(x)=xI(x^{\beta-1})$, which are illustrated by Figure \ref{II}. Put 
$$c(\beta)=\Big\{{ (1 -1/\beta)\pi\over\sin(\pi/\beta)}\Big\}^{\beta/(\beta-1)}.$$

\begin{lemma}\label{IJx}
The above defined functions $I(x)$ and $J(x)$
 are both non-negative and strictly convex over $x\in(0,\infty)$ with $I(1)=J(1)=0$. They satisfy the following asymptotical relations
\begin{align*}
 I(x)&\sim (\beta-1)^{-1}x,\quad x\to\infty,\\
  J(x)&\sim (\beta-1)^{-1}x^\beta,\quad x\to\infty,\\
I(x)&= c(\beta) x^{-{1\over\beta-1}}-\beta(\beta-1)^{-1}\ln x^{-1}-\ln c(\beta)-\beta +o(1), \quad x\to0,\\
 J(x)&= c(\beta) - \big(\beta \ln x + \ln c(\beta) + \beta\big)x +o(x), \quad x\to0.
 \end{align*}

\end{lemma}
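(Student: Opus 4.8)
The plan is to reduce everything to the single auxiliary function $G(\sigma)=\int_1^\infty(y^\beta-\sigma)^{-1}\,dy$, defined for $\sigma<1$. Putting $\sigma=(\beta-1)\tau$ turns the defining equation for $\tau(x)$ into $G(\sigma)=x/(\beta-1)$ and rewrites $I(x)=-x\sigma-\ln(1-\sigma)$. Since $G'(\sigma)=\int_1^\infty(y^\beta-\sigma)^{-2}\,dy>0$, $G$ is a strictly increasing bijection of $(-\infty,1)$ onto $(0,\infty)$, so $\sigma(x)$ (equivalently $\tau(x)$) is well defined and strictly increasing, with $\sigma\to-\infty$ as $x\to0$ and $\sigma\to1^-$ as $x\to\infty$. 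The boundary value $\tau(1)=0$ gives $I(1)=0$ and hence $J(1)=I(1)=0$; nonnegativity of $J$ is immediate from $J(x)=xI(x^{\beta-1})$ once $I\ge0$ is known, so the whole lemma rests on (a) the convexity and nonnegativity of $I$, and (b) the four asymptotic expansions.

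The key to (a) is the identity
\[(1-\sigma)^{-1}=(\beta-1)G(\sigma)+\beta\sigma G'(\sigma),\]
which I would obtain by integrating $\int_1^\infty y\,d\big(-(y^\beta-\sigma)^{-1}\big)$ by parts (the boundary term at $y=1$ produces $(1-\sigma)^{-1}$, and splitting $y^\beta=(y^\beta-\sigma)+\sigma$ in the bulk produces $G+\sigma G'$). Differentiating $I(x)=-x\sigma(x)-\ln(1-\sigma(x))$, substituting $x=(\beta-1)G(\sigma)$ and $\sigma'(x)=[(\beta-1)G'(\sigma)]^{-1}$, and using this identity, all the awkward terms cancel and leave the clean relation $I'(x)=\sigma(x)/(\beta-1)=\tau(x)$. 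Since $\tau$ is strictly increasing with $\tau(1)=0$, this shows $I$ is strictly convex with its unique minimum $I(1)=0$, hence $I\ge0$. For $J$ a parallel computation, now using $I'=\tau$ and the relation $w\sigma(w)=(\beta-1)w\tau(w)$ with $w=x^{\beta-1}$, collapses $J'(x)=I(x^{\beta-1})+(\beta-1)x^{\beta-1}\tau(x^{\beta-1})$ into the simple form
\[J'(x)=-\ln\!\big(1-(\beta-1)\tau(x^{\beta-1})\big).\]
As $\tau$ is increasing and $x\mapsto x^{\beta-1}$ is increasing, the right-hand side is increasing in $x$, so $J$ is strictly convex as well.

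For the asymptotics I would analyse $\sigma(x)$ through the constraint $G(\sigma)=x/(\beta-1)$ in the two limits. As $x\to\infty$, $\sigma\to1^-$; the logarithmic blow-up of $G$ near $\sigma=1$ (from $y^\beta-\sigma\approx\beta(y-1)+(1-\sigma)$) gives $G(\sigma)\sim\beta^{-1}\ln\frac{1}{1-\sigma}$, whence $1-\sigma\sim e^{-\beta x/(\beta-1)}$ and $I(x)=-x\sigma-\ln(1-\sigma)\sim-x+\tfrac{\beta}{\beta-1}x=(\beta-1)^{-1}x$; then $J(x)=xI(x^{\beta-1})\sim(\beta-1)^{-1}x^\beta$. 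As $x\to0$, $\sigma\to-\infty$; writing $M=-\sigma$ and rescaling $y=M^{1/\beta}u$ yields $G(-M)=M^{-(\beta-1)/\beta}\int_{M^{-1/\beta}}^\infty(1+u^\beta)^{-1}\,du$, and expanding the lower limit gives $G(-M)=K_\beta M^{-(\beta-1)/\beta}-M^{-1}+O(M^{-2})$, where $K_\beta=\int_0^\infty(1+u^\beta)^{-1}\,du=\pi/(\beta\sin(\pi/\beta))$. Recognising $c(\beta)=((\beta-1)K_\beta)^{\beta/(\beta-1)}$, I would invert this relation to second order, $M=c(\beta)x^{-\beta/(\beta-1)}\big(1-\tfrac{\beta}{c(\beta)}x^{1/(\beta-1)}+o(x^{1/(\beta-1)})\big)$, and substitute into $I(x)=xM-\ln(1+M)$: the leading term gives $c(\beta)x^{-1/(\beta-1)}$, the $-M^{-1}$ correction supplies exactly the constant $-\beta$, and the term $-\ln(1+M)=-\ln M+o(1)$ supplies $-\beta(\beta-1)^{-1}\ln x^{-1}-\ln c(\beta)$. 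Substituting $w=x^{\beta-1}$ into this expansion and multiplying by $x$ then yields the stated small-$x$ expansion of $J$.

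The main obstacle is twofold. The conceptual heart is the integration-by-parts identity, which is short but not obvious and is what miraculously turns $I'$ into $\tau$ and $J'$ into a single logarithm, thereby rendering both convexity statements transparent; without it one faces an unpleasant implicit-function computation. The most delicate routine part is the $x\to0$ analysis, where the constant $-\beta$ in the expansion of $I$ — and hence the coefficient of the $O(x)$ term of $J$ — is produced only by the second-order correction $-M^{-1}$ in $G(-M)$, so one must compute the exact value $\int_0^\infty(1+u^\beta)^{-1}\,du=\pi/(\beta\sin(\pi/\beta))$ and keep careful track of the subleading term when inverting.
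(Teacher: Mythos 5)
Your proposal is correct and, for the most part, runs parallel to the paper's argument, but with one genuine simplification worth recording. The paper works with $\Lambda(u)=-\int_1^\infty\log(1-(\beta-1)uy^{-\beta})\,dy$, notes $\Lambda'(\tau(x))=x$, and uses an integration by parts to get $\Lambda(\tau)=\ln(1-(\beta-1)\tau)+\beta x\tau$, hence $I(x)=x\tau(x)-\Lambda(\tau(x))$ and $I'=\tau$ by Legendre duality; your identity $(1-\sigma)^{-1}=(\beta-1)G(\sigma)+\beta\sigma G'(\sigma)$ is precisely the derivative of that relation (note $G(\sigma)=(\beta-1)^{-1}\Lambda'((\beta-1)^{-1}\sigma)$), so the treatment of $I$ is the same in substance. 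Where you genuinely diverge is the convexity of $J$: the paper writes $J'(x)=R(x^{\beta-1})$ with $R(x)=(\beta-1)x\tau(x)+I(x)$ and proves $R'>0$ by expressing $R'=\tau'\,r(\tau)$ with $r(u)=\int_1^\infty(y^\beta+u)\big((\beta-1)^{-1}y^\beta-u\big)^{-2}dy$ and establishing $r(-u)>0$ for $u>1$ via a change of variables and an integral comparison; you instead notice that $R(x)=-\ln(1-(\beta-1)\tau(x))$ identically, so $J'(x)=-\ln\big(1-(\beta-1)\tau(x^{\beta-1})\big)$ is increasing for free, which eliminates the only nontrivial estimate in the paper's convexity argument. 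The two asymptotic regimes are handled the same way in both proofs (logarithmic blow-up of the defining integral at the right endpoint for $x\to\infty$; rescaling to $\int_0^\infty(1+u^\beta)^{-1}du=\pi/(\beta\sin(\pi/\beta))$ plus a second-order inversion for $x\to0$), and your second-order term $M=c(\beta)x^{-\beta/(\beta-1)}-\beta x^{-1}+o(x^{-1})$ is the correct one: the paper's displayed intermediate $\tau(x)=-b(\beta)x^{-\beta/(\beta-1)}+x^{-1}+o(x^{-1})$ should have coefficient $\beta(\beta-1)^{-1}$ on $x^{-1}$, although its final constant $-\beta$ in the expansion of $I$ is right, matching yours. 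One caveat you inherit from the paper: substituting the expansion of $I$ into $J(x)=xI(x^{\beta-1})$ actually yields $c(\beta)-\big(\beta\ln x^{-1}+\ln c(\beta)+\beta\big)x+o(x)$, so the stated $J$-expansion carries $\ln x$ where it should carry $\ln x^{-1}$; this is a typo in the lemma as printed rather than a gap in your argument, but your claim to recover ``the stated'' expansion silently passes over it.
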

 
The next large deviation theorem extends a result derived in \cite{DPS} for the Kingman coalescent. 

\begin{theorem} \label{LDT}
Consider a death process satisfying \eqref{crv} with $\beta>1$.

(i)  If $x \ge 1$, then 
  \begin{align*}
   n^ {-1 } \log  \mathbb{P}(T_n  > x A_n)& \to -I(x),\quad n\to\infty,\\
  v (t) ^ {-1 }\log  \mathbb{P}(Z(t) > xv(t)) &\to -J(x),\quad t\to0.
  \end{align*}
  
 (ii) If $0<x\le 1$, then 
\begin{align*}
    n^ {-1 } \log  \mathbb{P}(T_n  < x A_n)& \to -I(x),\quad n\to\infty,\\
  v (t) ^ {-1 }\log  \mathbb{P}(Z(t) <xv(t)) &\to -J(x),\quad t\to0.
    \end{align*}
 \end{theorem}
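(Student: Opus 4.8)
The plan is to reduce both parts of the theorem to a single large deviation principle for the normalised hitting time $T_n/A_n$, and then to transfer these statements to $Z(t)$ using the duality $\{Z(t)>m\}=\{T_m>t\}$ together with the regular variation of $A_n$. Since $T_n=\sum_{i>n}X_i$ is a sum of independent exponentials, the natural tool is the G\"artner--Ellis theorem applied at speed $n$. First I would write the moment generating function explicitly: for $u<\lambda_{n+1}$,
\[\mathbb E e^{uT_n}=\prod_{i=n+1}^\infty\Bigl(1-\tfrac{u}{\lambda_i}\Bigr)^{-1},\]
so that, inserting the tilt $u=\theta n/A_n$,
\[\Lambda_n(\theta):=\frac1n\log\mathbb E e^{\theta nT_n/A_n}=-\frac1n\sum_{i=n+1}^\infty\log\Bigl(1-\frac{\theta n}{A_n\lambda_i}\Bigr).\]
Using \eqref{crv}, namely $\lambda_i\sim i^\beta L(i)$ and $A_n\sim(\beta-1)^{-1}n^{1-\beta}L^{-1}(n)$, the substitution $i=ny$ gives $n(A_n\lambda_{ny})^{-1}\to(\beta-1)y^{-\beta}$, and the Riemann sum converges to
\[\Lambda(\theta)=-\int_1^\infty\log\Bigl(1-\frac{(\beta-1)\theta}{y^\beta}\Bigr)\,dy,\qquad \theta<(\beta-1)^{-1}.\]

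Differentiating under the integral sign yields $\Lambda'(\theta)=\int_1^\infty(\beta-1)(y^\beta-(\beta-1)\theta)^{-1}\,dy$, which equals $x$ precisely when $\theta=\tau(x)$ by the defining equation of $\tau$; moreover $\Lambda'(\theta)\to\infty$ as $\theta\uparrow(\beta-1)^{-1}$ and $\Lambda'(\theta)\to0$ as $\theta\to-\infty$, so $\Lambda$ is steep and its Legendre dual is finite on all of $(0,\infty)$. Integrating $\Lambda$ by parts and using the defining equation of $\tau(x)$ to eliminate the remaining integral, I would check that the Legendre transform $\sup_\theta(\theta x-\Lambda(\theta))$, attained at $\theta=\tau(x)$, reduces exactly to $I(x)=-(\beta-1)x\tau(x)-\ln(1-(\beta-1)\tau(x))$. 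With $\Lambda$ essentially smooth and lower semicontinuous, the G\"artner--Ellis theorem then gives a full LDP for $T_n/A_n$ with rate $I$. Since Lemma \ref{IJx} provides that $I$ is strictly convex with $I(1)=0$, it is strictly increasing on $[1,\infty)$ and strictly decreasing on $(0,1]$; hence the LDP upper and lower bounds match at the boundary point $x$, giving $n^{-1}\log\mathbb P(T_n>xA_n)\to-I(x)$ for $x\ge1$ and $n^{-1}\log\mathbb P(T_n<xA_n)\to-I(x)$ for $0<x\le1$, which are the $T_n$ assertions in (i) and (ii).

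For the $Z(t)$ assertions I would set $n=v(t)$, so that $t\in[A_n,A_{n-1})$ and $t\sim A_n$ as $t\to0$, since $A_{n-1}/A_n\to1$ under \eqref{crv}. Using $\{Z(t)>m\}=\{T_m>t\}$ with $m=\lfloor xv(t)\rfloor$ and the regular variation $A_n/A_{xn}\to x^{\beta-1}$, I would rewrite $\mathbb P(Z(t)>xv(t))=\mathbb P(T_{xn}>t)$ and compare it, via a sandwiching of $t$ between $A_n$ and $A_{n-1}$ together with the continuity of $I$, with $\mathbb P(T_{xn}/A_{xn}>x^{\beta-1})$. For $x\ge1$ one has $x^{\beta-1}\ge1$, so the $T_n$ result applies at level $x^{\beta-1}$ and speed $xn$, giving $(xn)^{-1}\log\mathbb P(T_{xn}>t)\to-I(x^{\beta-1})$ and hence $v(t)^{-1}\log\mathbb P(Z(t)>xv(t))\to-xI(x^{\beta-1})=-J(x)$. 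The lower tail in (ii) is identical, using instead $\{Z(t)<m\}=\{T_{m-1}\le t\}$ and $x^{\beta-1}\le1$.

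The main obstacle is the very first step: justifying rigorously the passage from the Riemann sum $\Lambda_n(\theta)$ to the integral $\Lambda(\theta)$. This requires uniform control of the regularly varying quantities $n(A_n\lambda_{ny})^{-1}$, via the uniform convergence theorem and Potter's bounds for $L$ \cite{BGT}, both near $y=1$ --- where the integrand develops a logarithmic singularity as $\theta\uparrow(\beta-1)^{-1}$ and the discreteness of the index is most delicate --- and in the tail $y\to\infty$, where integrability must be preserved. Once the limit $\Lambda$ and its steepness are established with enough uniformity, the remaining steps are routine applications of G\"artner--Ellis and of the regular variation of $A_n$.
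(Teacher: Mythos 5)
Your proposal is correct, and at its core it is the same argument as the paper's: the same exponential tilt $u=\tau(x)n/A_n$, the same limiting cumulant generating function $\Lambda(\theta)=-\int_1^\infty\log(1-(\beta-1)\theta y^{-\beta})\,dy$ justified by Potter-type uniform bounds for the regularly varying $\lambda_{ny}/\lambda_n$, the identification $I=\Lambda^*$ via $\Lambda'(\tau(x))=x$, and the same duality $\{Z(t)>m\}=\{T_m>t\}$ with $A_n/A_{nx}\to x^{\beta-1}$ for the second halves of (i) and (ii). The one genuine difference is how you obtain the lower bound: you invoke the G\"artner--Ellis theorem as a black box, whereas the paper carries out Cram\'er's change of measure explicitly, introducing tilted rates $\tilde\lambda_{i,n}=\lambda_i-\tau(x)nA_n^{-1}$ and proving a central limit theorem for the tilted hitting time $\tilde T_n$ (by checking $\tilde C_n=o(\tilde B_n)$) so that $\tilde F_n([xA_n,bA_n])\to 1/2$. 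Your route is shorter but shifts the burden onto verifying the hypotheses of G\"artner--Ellis: you correctly note steepness ($\Lambda'(\theta)\to\infty$ as $\theta\uparrow(\beta-1)^{-1}$, since the limiting integrand has a nonintegrable singularity at $y=1$), but you should also record that $\Lambda_n(\theta)\to+\infty$ for $\theta>(\beta-1)^{-1}$ (because $\lambda_{n+1}A_n/n\to(\beta-1)^{-1}$) and be aware that at the single boundary point $\theta=(\beta-1)^{-1}$ the limit need not exist; this is harmless since both G\"artner--Ellis bounds only use $\theta$ in the interior of the domain, but it deserves a sentence. The paper's hands-on version buys a self-contained proof and, as a by-product, the Gaussian fluctuation of $\tilde T_n$ around $xA_n$; your version buys brevity and makes transparent that the result is a standard LDP with rate $I=\Lambda^*$. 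Finally, in the transfer to $Z(t)$ you correctly reduce to $n^{-1}\log\mathbb P(T_{nx}>x_nA_{nx})$ with a moving level $x_n\to x^{\beta-1}$; the sandwich by $\mathbb P(T_{nx}>(x^{\beta-1}\pm\epsilon)A_{nx})$ together with continuity of $I$ closes this, and is exactly what the paper's terse final display also implicitly relies on.
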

 
 \begin{figure}
\centering
\includegraphics[scale =0.295]{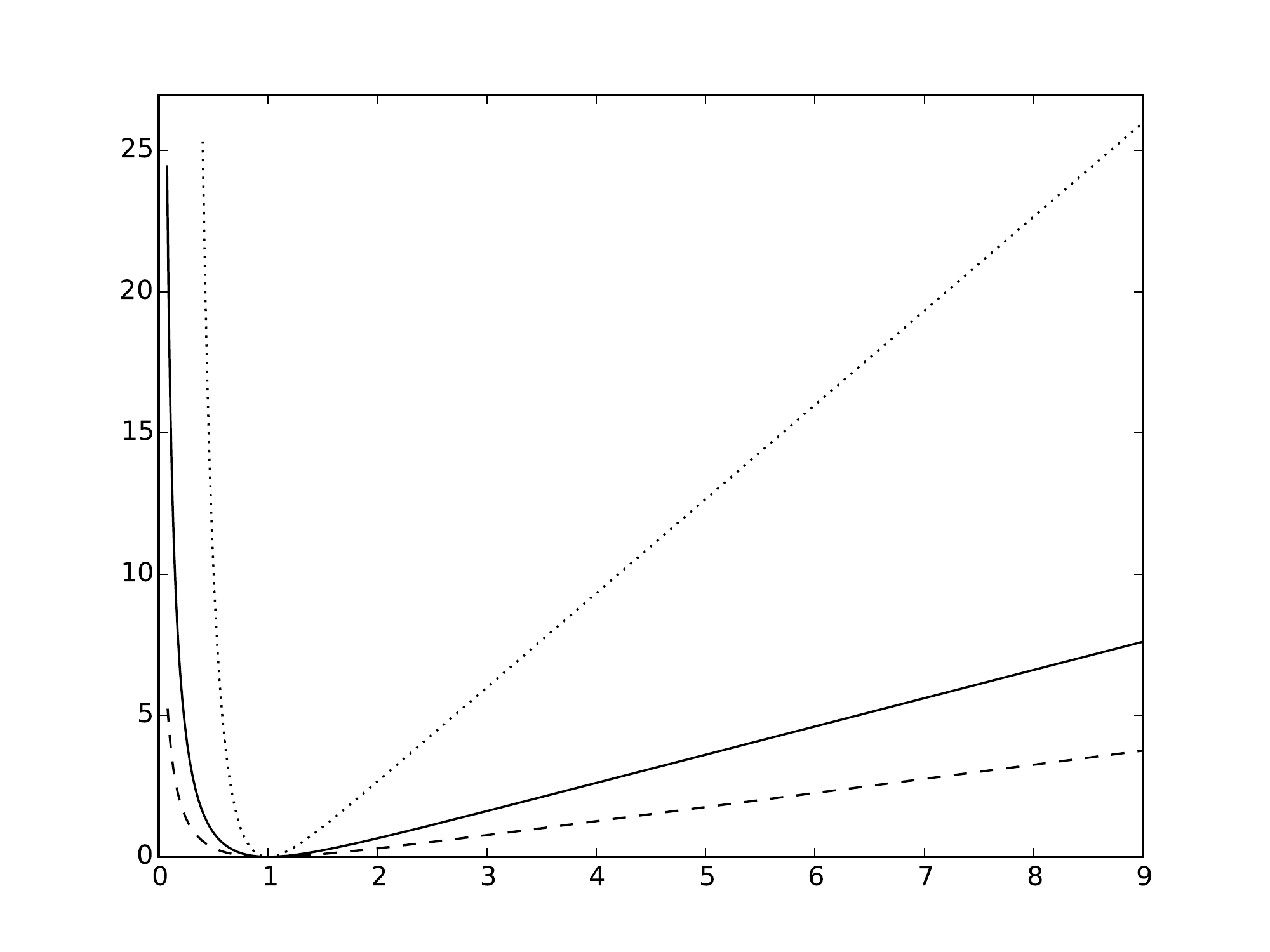}
\includegraphics[scale = 0.295]{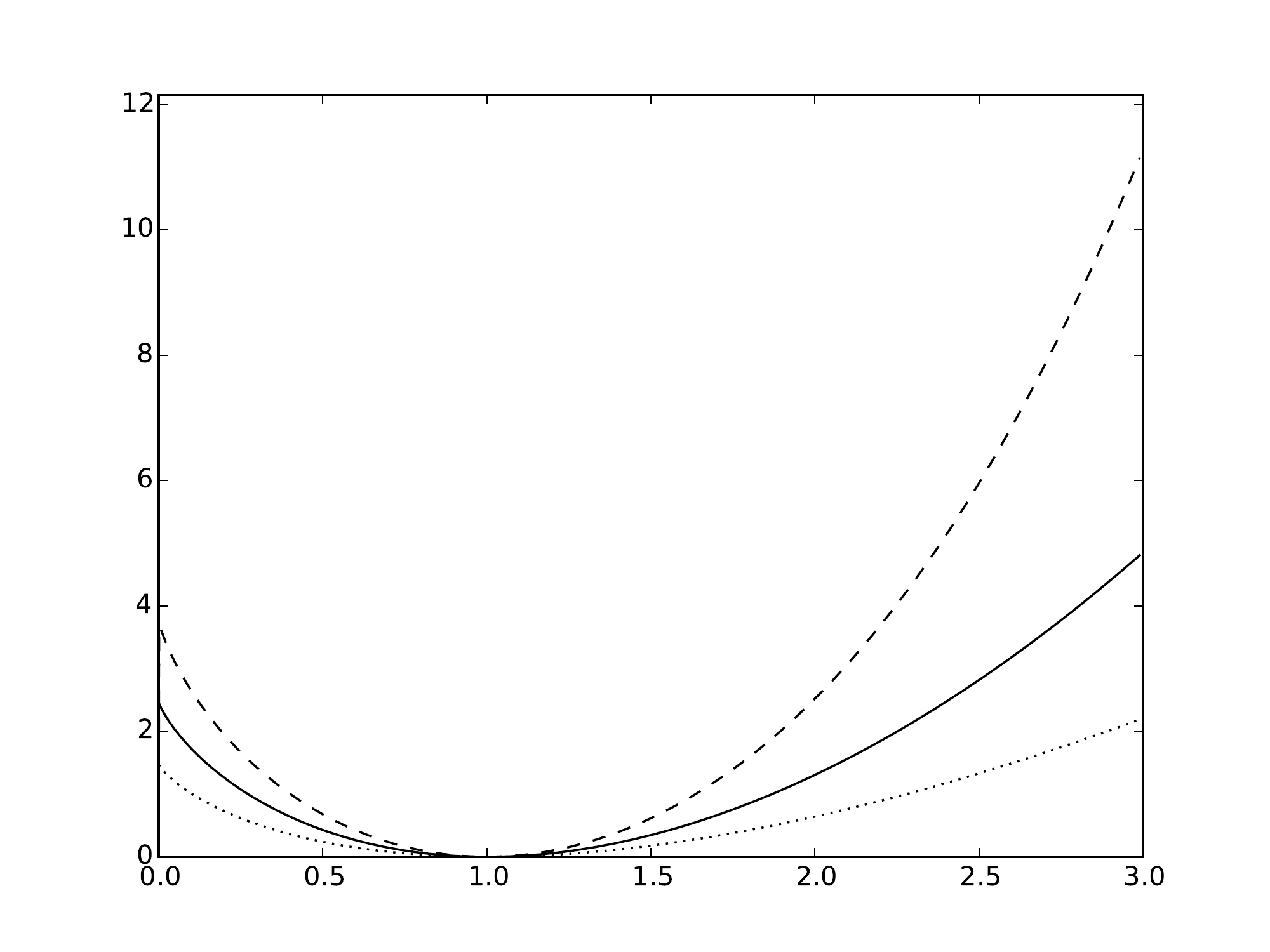}
  \caption{We use values $\beta=1.3$ (dotted lines), $\beta=2$ (solid lines), and $\beta=3$ (dashed lines) to present three pairs of profiles for the rate functions $I(x)$ on the left panel, and $J(x)$ on the right panel.
  }\label{II}
\end{figure}

\section{Proof of Theorem \ref{l1} }\label{pr1}

We start with two lemmas. Lemma  \ref{Koi} is a version of the Kolmogorov inequality, needed in the proof of Lemma  \ref{leT}.
Lemma  \ref{leT} is used in the proof of Theorem \ref{l1} (iii) and Theorem \ref{ThZ} (ii).
\begin{lemma}\label{Koi}
 If an infinite sum $\xi_1+\xi_2+\ldots $ of independent zero mean random variables converges almost surely, and $\zeta_n:=\xi_n+\xi_{n+1}+\ldots$, then for each $\epsilon>0$,
\begin{align*}
  \mathbb{P}(\sup_{k\ge n} |\zeta_k | \ge \epsilon ) &\le \epsilon^{-2}\mathbb E\zeta_n^{2},\quad n\ge1.
\end{align*}

\end{lemma}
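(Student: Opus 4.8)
The plan is to mimic Kolmogorov's classical maximal inequality, but with the partition taken over the \emph{last} threshold-crossing index rather than the first, which is the natural choice for tail sums. First I would dispose of the trivial case: if $\mathbb E\zeta_n^2=\infty$ the bound holds vacuously, so I may assume $\mathbb E\zeta_n^2<\infty$; by orthogonality of the zero-mean independent terms, $\mathbb E\zeta_k^2=\sum_{i\ge k}\mathbb E\xi_i^2\le\mathbb E\zeta_n^2<\infty$ for every $k\ge n$, so all tail sums lie in $L^2$. Then I would truncate: it suffices to bound $\mathbb P(A_N)$, where $A_N=\{\max_{n\le k\le N}|\zeta_k|\ge\epsilon\}$, uniformly in $N$. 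Since the series converges a.s., $\zeta_k\to0$, so only finitely many indices satisfy $|\zeta_k|>\epsilon/2$; hence on a full-measure set the supremum $\sup_{k\ge n}|\zeta_k|$ is attained at a finite index, which gives $A_N\uparrow\{\sup_{k\ge n}|\zeta_k|\ge\epsilon\}$ and $\mathbb P(\sup_{k\ge n}|\zeta_k|\ge\epsilon)=\lim_N\mathbb P(A_N)$.

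For the finite-range bound I would partition $A_N$ according to the \emph{largest} crossing index, setting $D_k=\{|\zeta_k|\ge\epsilon,\ |\zeta_{k+1}|<\epsilon,\ldots,|\zeta_N|<\epsilon\}$ for $n\le k\le N$; these are disjoint with $A_N=\bigsqcup_{k=n}^N D_k$. The decisive structural observation is that $\zeta_k,\zeta_{k+1},\ldots,\zeta_N$ are all measurable with respect to $\mathcal G_k:=\sigma(\xi_k,\xi_{k+1},\ldots)$, so both $D_k$ and $\zeta_k\mathbf 1_{D_k}$ are $\mathcal G_k$-measurable, whereas the front block $R_k:=\xi_n+\cdots+\xi_{k-1}$ is independent of $\mathcal G_k$ and has mean zero. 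Writing $\zeta_n=R_k+\zeta_k$ and expanding, the cross term vanishes by independence, $\mathbb E[R_k\zeta_k\mathbf 1_{D_k}]=\mathbb E[R_k]\,\mathbb E[\zeta_k\mathbf 1_{D_k}]=0$, and discarding the nonnegative $R_k^2$ term yields $\mathbb E[\zeta_n^2\mathbf 1_{D_k}]\ge\mathbb E[\zeta_k^2\mathbf 1_{D_k}]\ge\epsilon^2\mathbb P(D_k)$. Summing over $k$ gives $\mathbb E\zeta_n^2\ge\mathbb E[\zeta_n^2\mathbf 1_{A_N}]\ge\epsilon^2\mathbb P(A_N)$, i.e. $\mathbb P(A_N)\le\epsilon^{-2}\mathbb E\zeta_n^2$, and letting $N\to\infty$ finishes the proof.

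The step that carries the whole argument — and the only place where the tail-sum structure genuinely enters — is the choice to partition by the last crossing and the resulting independence of the front block $R_k$ from the $\mathcal G_k$-measurable data appearing on $D_k$. This is exactly the mirror image of Kolmogorov's usual first-crossing partition of forward partial sums: there the increment after the crossing is independent of the past, here the block before the (last) crossing is independent of the tail. Equivalently, one could note that $(\zeta_k,\mathcal G_k)_{k\ge n}$ is a reverse martingale and invoke Doob's $L^2$ maximal inequality, but the direct partition keeps the lemma self-contained as the surrounding section intends. I do not anticipate a real obstacle beyond getting the measurability and independence bookkeeping right and handling the passage $N\to\infty$ cleanly.
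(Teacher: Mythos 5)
Your proposal is correct and follows essentially the same route as the paper: partition the maximal event according to the \emph{last} index at which $|\zeta_k|\ge\epsilon$, observe that this event is measurable with respect to $\sigma(\xi_k,\xi_{k+1},\ldots)$, and deduce $\mathbb E[\zeta_n^2\mathbf 1_{D_k}]\ge\mathbb E[\zeta_k^2\mathbf 1_{D_k}]\ge\epsilon^2\mathbb P(D_k)$ — the paper phrases this step via the backward-submartingale property of $\zeta_k^2$, while you expand $\zeta_n=R_k+\zeta_k$ and kill the cross term by independence, which is the same inequality in elementary dress. Your extra care with the truncation at $N$ and the a.s.\ attainment of the supremum is a harmless refinement of what the paper leaves implicit.
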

\begin{proof}
 It is easy to check that the sequence $\zeta _n$ forms a backward martingale.
 Putting $B_k=\{|\zeta_k|\ge\epsilon,|\zeta_{k+1}|<\epsilon,|\zeta_{k+2}|<\epsilon,\ldots\}$ and using the submartingale property of $\zeta _n^{2}$ we get
\begin{align*}
 {\rm E}(\zeta^{2}_n)&\ge \sum_{k=n}^\infty {\rm E}(\zeta^{2}_n1_{B_k})\ge \sum_{k=n}^\infty  {\rm E}(\zeta^{2}_k1_{B_k})\ge \epsilon^{2}\sum_{k=n}^\infty {\rm P}(B_i) = \epsilon^{2}\mathbb{P}(\sup_{k\ge n} |\zeta_k | \ge \epsilon ).
\end{align*}

\end{proof}
 
\begin{lemma}\label{leT}
  If  \eqref{cond1e} holds for some $\epsilon\in[0,1)$,
then for any $\delta>0$,
\[\mathbb P\Big(\sup_{k\ge n}{|T_k-A_k|\over A_{k(1-\epsilon)}}>\delta\Big)\to0,\quad n\to\infty.\]

\end{lemma}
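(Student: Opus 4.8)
The plan is to control the tail supremum $\sup_{k\ge n}|T_k-A_k|/A_{k(1-\epsilon)}$ by relating it to the backward-martingale supremum handled in Lemma~\ref{Koi}. Recall that $T_k-A_k=(X_{k+1}-\lambda_{k+1}^{-1})+(X_{k+2}-\lambda_{k+2}^{-1})+\ldots$ is the tail sum $\zeta_k$ of the independent zero-mean centred holding times $\xi_i:=X_i-\mathbb EX_i$, which converges almost surely by \eqref{c1} (indeed $\sum_i\mathrm{Var}(\xi_i)=\sum_i\lambda_i^{-2}\le(\sum_i\lambda_i^{-1})^2<\infty$). The obstacle is that the denominator $A_{k(1-\epsilon)}$ varies with $k$, so the event is not directly of the form $\{\sup_{k\ge n}|\zeta_k|\ge\epsilon\}$ to which Lemma~\ref{Koi} applies. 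My first step is therefore a dyadic (geometric) blocking argument to replace the moving normalisation by a constant one on each block.

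First I would fix a geometric grid $n_j=\lfloor n q^{\,j}\rfloor$ for some $q>1$ and $j\ge0$, and split the range $k\ge n$ into blocks $n_j\le k<n_{j+1}$. On the $j$-th block, monotonicity of $A_m$ in $m$ gives the uniform lower bound $A_{k(1-\epsilon)}\ge A_{n_{j+1}(1-\epsilon)}$, so that
\[
\Big\{\sup_{n_j\le k<n_{j+1}}\frac{|\zeta_k|}{A_{k(1-\epsilon)}}>\delta\Big\}
\subseteq\Big\{\sup_{k\ge n_j}|\zeta_k|>\delta\,A_{n_{j+1}(1-\epsilon)}\Big\}.
\]
Applying Lemma~\ref{Koi} with threshold $\delta A_{n_{j+1}(1-\epsilon)}$ bounds the probability of the $j$-th block event by $\delta^{-2}A_{n_{j+1}(1-\epsilon)}^{-2}\,\mathbb E\zeta_{n_j}^2=\delta^{-2}A_{n_{j+1}(1-\epsilon)}^{-2}B_{n_j}^2$. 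A union bound over $j$ then yields
\[
\mathbb P\Big(\sup_{k\ge n}\frac{|T_k-A_k|}{A_{k(1-\epsilon)}}>\delta\Big)
\le\delta^{-2}\sum_{j\ge0}\frac{B_{n_j}^2}{A_{n_{j+1}(1-\epsilon)}^2}.
\]

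The final step is to show this series is the tail of a convergent series, so that it vanishes as $n\to\infty$. The key is that the summable hypothesis \eqref{cond1e}, namely $\sum_i(\lambda_{i+1}A_{i(1-\epsilon)})^{-2}<\infty$, controls exactly the ratio $B^2/A^2$ after the geometric thinning: writing $B_{n_j}^2=\sum_{i>n_j}\lambda_i^{-2}$ and comparing each summand $\lambda_i^{-2}$ to $(\lambda_{i}A_{(i-1)(1-\epsilon)})^{-2}$, one recovers the general term of \eqref{cond1e} up to the factor $A_{(i-1)(1-\epsilon)}^2/A_{n_{j+1}(1-\epsilon)}^2$, which is bounded on the block by a constant depending only on $q$. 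Summing over the block and then over $j$ reassembles a constant multiple of the full series $\sum_i(\lambda_{i+1}A_{i(1-\epsilon)})^{-2}$, which is finite by \eqref{cond1e}; hence the tail sum over $j$ tends to $0$ as $n\to\infty$. I expect the main obstacle to be making the per-block comparison between $B_{n_j}^2/A_{n_{j+1}(1-\epsilon)}^2$ and the increments of \eqref{cond1e} clean and uniform in $j$, i.e. choosing $q$ and bounding the ratio $A_{(i-1)(1-\epsilon)}/A_{n_{j+1}(1-\epsilon)}$ on each block so that the reindexing does not lose summability; the martingale inequality and the union bound are routine once that comparison is secured.
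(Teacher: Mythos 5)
Your overall architecture (blocking, the Kolmogorov-type inequality of Lemma \ref{Koi} on each block, a union bound, and reassembling the series \eqref{cond1e}) is the same as the paper's, but you block in the wrong variable, and this is a genuine gap. You partition the indices geometrically, $n_j=\lfloor nq^{\,j}\rfloor$, and then need the normaliser $A_{k(1-\epsilon)}$ to be comparable, uniformly in $j$, across each block: concretely you claim that $A_{(i-1)(1-\epsilon)}^2/A_{n_{j+1}(1-\epsilon)}^2$ is bounded by a constant depending only on $q$ for $n_j<i\le n_{j+1}$. Nothing in the hypotheses gives this; it amounts to a doubling condition $A_n/A_{qn}=O(1)$ in the index, which fails e.g.\ for $A_n=e^{-n}$, where the ratio is of order $e^{(1-\epsilon)(q-1)n_j}\to\infty$. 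The failure is not cosmetic. Take $A_n=\exp\{-2\sqrt n/\log n\}$, so that $(\lambda_{i+1}A_i)^{-1}=1-A_{i+1}/A_i\sim(\sqrt i\log i)^{-1}$ and \eqref{cond1}, i.e.\ \eqref{cond1e} with $\epsilon=0$, holds; then already a single term of your final bound satisfies
\[
\frac{B_{n_j}^2}{A_{n_{j+1}}^2}\ \ge\ \frac{\lambda_{n_j+1}^{-2}}{A_{n_{j+1}}^2}
=\Big(1-\frac{A_{n_j+1}}{A_{n_j}}\Big)^2\frac{A_{n_j}^2}{A_{n_{j+1}}^2}
\ \ge\ \frac{c}{n_j\log^2 n_j}\,\exp\Big\{\frac{c'\sqrt{n_j}}{\log n_j}\Big\}\ \longrightarrow\ \infty,
\]
so the upper bound you arrive at is useless even though the lemma's conclusion is true. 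A second, related problem: $B_{n_j}^2$ is the full tail $\sum_{i>n_j}\lambda_i^{-2}$, not the block sum, so after interchanging the order of summation each $\lambda_i^{-2}$ carries the weight $\sum_{j:\,n_j<i}A_{n_{j+1}(1-\epsilon)}^{-2}$, and dominating this by $A_{(i-1)(1-\epsilon)}^{-2}$ again requires a comparability that index-based blocking does not provide.

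The paper's proof (following Klesov) fixes exactly this point by blocking according to the dyadic \emph{value} of the normaliser rather than the index: indices $k$ are grouped by the integer $u_k$ with $2^{-u_k-1}<A_{k(1-\epsilon)}\le 2^{-u_k}$. Within such a block the normaliser is constant up to a factor $2$ by construction, Lemma \ref{Koi} is applied at the constant level $\delta 2^{-j-1}$, and the overcounting coming from the full tails $\sum_{k\ge v_j}\lambda_{k+1}^{-2}$ is absorbed by the geometric factor $\sum_{j\le l}4^{j-l}$. If you replace your grid $(n_j)$ by these value-based blocks, the rest of your argument goes through and you recover a constant multiple of the tail of $\sum_k(\lambda_{k+1}A_{k(1-\epsilon)})^{-2}$ as intended.
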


\begin{proof}
The following proof is an adaptation of the proof of  Proposition 1 in \cite{K}.  
For a given $n$, let $u_n$ be the unique natural number satisfying $$2^{-u_n-1} <A_{n(1-\epsilon)} \le2^{-u_n}.$$
Clearly $u_n\le u_{n+1}$ and $u_n\to\infty$. Putting 
$v_j=\min\{k: u_k=j\}$, we obtain 
\begin{align*}
  \mathbb{P}(\sup_{k\ge n} A_{k(1-\epsilon)}  ^{-1} | T_k - A_k |
   &\ge \epsilon ) \le \sum_{j\ge u_n}\mathbb{P}(\max_{k: u_k=j}A_{k(1-\epsilon)}  ^{-1} | T_k - A_k | \ge \epsilon )\\
 & \le\sum_{j\ge u_n}\mathbb{P}(\max_{k: u_k=j}{ |T_k - A_k| } \ge \epsilon 2^{-j-1}) \\
  &
  \le\sum_{j\ge u_n}\mathbb{P}(\sup_{k\ge v_j}{ |T_k - A_k| } \ge \epsilon 2^{-j-1}).
\end{align*}
Notice that for some $j$ the set of indices $\{k: u_k=j\}$ might be empty - in such a case the corresponding maximum is assumed to be zero.

Suppose condition \eqref{cond1e} holds for an $\epsilon\in[0,1)$. 
By Lemma \ref{Koi} applied to  
\begin{align}\label{ksi}
\xi_i=X_i-\lambda_i^{-1}
\end{align}
 having centered exponential distributions, we see that there is a positive contant $c$ such that
\begin{align*}
\sum_{j\ge u_n}\mathbb{P}(\sup_{k\ge v_j}{ |T_k - A_k| } \ge \epsilon 2^{-j-1})&\le 
\sum_{j\ge u_n}c\epsilon^{-2} 4^{j+1}\sum_{k\ge v_j}\lambda_{k+1}^{-2}\\
&=c\epsilon^{-2} \sum_{j\ge u_n}4^{j+1}\sum_{l\ge j}4^{-l}\sum_{k: u_k=l}(\lambda_{k+1}2^{-l})^{-2}\\
&\le  c\epsilon^{-2}  \sum_{l\ge u_n}\sum_{j=u_n}^l4^{j-l+1}\sum_{k: u_k=l}(\lambda_{k+1}A_{k(1-\epsilon)} )^{-2}.
\end{align*}
Thus,
\begin{align*}
 \mathbb{P}(\sup_{k\ge n} A_{k(1-\epsilon)}  ^{-1} | T_k - A_k |
&\le  4c\epsilon^{-2}  \sum_{l\ge u_n}\sum_{k: u_k=l}(\lambda_{k+1}A_{k(1-\epsilon)})^{-2}\\
&=   4c\epsilon^{-2} \sum_{k\ge K_n}(\lambda_{k+1}A_{k(1-\epsilon)})^{-2},
\end{align*}
where $K_n=\min\{k:u_k=u_n\}$ is $v_j$ for $j=u_n$. By monotonicity of $A_n$,  we have $K_n\to\infty$ as $n\to\infty$, and the statement of Lemma  \ref{leT} follows.
\end{proof}

Observe that for any given $u_0>0$, the moment generating function
\begin{align}\label{mgf}
Ee^{uT_n}&=\prod_{i=n+1}^\infty{\lambda_i\over \lambda_i-u}=
\exp\Big\{- \sum_{i=n+1}^\infty  \log (1-u\lambda_i^{-1})\Big\},\quad u\in(-\infty, u_0],
\end{align}
is well-defined for all sufficiently large $n$.\\

\noindent{\sc Proof of Theorem \ref{l1} (i)}.
By  \eqref{mgf} and  \eqref{lambdaA}, we get for each $u\ge0$,
\[\mathbb Ee^{-uT_n/A_n}=\prod_{k\ge n+1}{1\over u(\lambda_k A_n)^{-1}+1}\to \prod_{i\ge 0}{1\over u\alpha^i(1-\alpha)+1}. \]

\noindent{\sc Proof of Theorem \ref{l1} (ii)}. The stated convergence in probability  is easily derived using  the Chebyshev inequality, see Remark 4 in Section \ref{Rgr}. \\

\noindent{\sc Proof of Theorem \ref{l1} (iii)}.
The stated almost sure convergence  is a straightforward corollary of Lemma \ref{leT} with $\epsilon=0$.\\

\noindent{\sc Proof of Theorem \ref{l1} (iv)}.
Using \eqref{mgf}
and notation \eqref{ksi}, we find
\begin{align*}
\mathbb Ee^{u(\xi_n+\xi_{n+1}+\ldots)}&
=
\exp\Big\{- \sum_{i=n}^\infty u\lambda_i^{-1} + \log (1-t\lambda_i^{-1})\Big\}.
\end{align*}
Applying the Taylor formula for the logarithm we see that under condition \eqref{cond2},
\begin{align*}
Ee^{uB_n^{-1}(\xi_{n+1}+\xi_{n+2}+\ldots)}
\sim \exp\Big\{\sum_{i=n+1}^\infty{(uB_n^{-1}\lambda_i^{-1})^{2}\over 2}\Big\}= e^{u^2/2}.
\end{align*}

\section{Proof of Theorem \ref{ThZ}}
Observe that since
 \[{Z(A_{n-1})\over n}\le{Z(t)\over v(t)}\le{Z(A_{n})\over n},\quad n=v(t),\]
 convergence ${Z(t)\over v(t)}\to1$  as $t\to0$ is equivalent to ${Z(A_{n})\over n}\to1$ as $n\to\infty$.\\

\noindent{\sc Proof of part  (i)}.
Fix some arbitrary $\epsilon \in(0,1)$ and $u \in (0,\infty)$. Given \eqref{Rxr}, there exist a $\delta \in (0,1)$ and an $n_0=n_0(\epsilon,u)$ such that for all $n\ge n_0$,
\begin{align*}
&A_{n(1+\epsilon)}/A_n < \delta,\\
&A_n\lambda_k>2\delta u,\quad k>n(1+\epsilon),
\end{align*}
and the moment generating function
\[\mathbb{E} e^{uT_{n(1+\epsilon)}/A_n}=\prod_{k>n(1+\epsilon)} { 1 \over 1 - (A_n\lambda_k)^{-1}u }\]
is well defined. By Markov's inequality,
\begin{align*}
  \mathbb P(T_{n(1+\epsilon)}>A_n) 
  &\le e^{-u}\mathbb{E}e^{uT_{n(1+\epsilon)}/A_n}
 = e^{-u}\exp\Big\{ - \sum_{k>n(1+\epsilon)} \ln(1-(A_n\lambda_k)^{-1})u\Big\},
  \end{align*}
yielding
\begin{align*}
  \mathbb P(T_{n(1+\epsilon)}>A_n) 
  &\le e^{-u} \exp\Big\{\sum_{k>n(1+\epsilon)} { (A_n\lambda_k)^{-1}u \over 1-(A_n\lambda_k)^{-1}u} \Big\}\le e^{-u/2}.
 \end{align*}
 Letting $u\to\infty$, we see that $\mathbb P(T_{n(1+\epsilon)}>A_n)\to0$.
Since
\[\mathbb P(Z(A_n)>n(1+\epsilon))=\mathbb P(T_{n(1+\epsilon)}>A_n),\]
we conclude that $\mathbb P(Z(A_n)>n(1+\epsilon))\to0$.
In the same way we can prove that  $\mathbb P(Z(A_n)<n(1-\epsilon))\to0$ as $ n\to \infty.$\\

\noindent{\sc Proof of part  (ii)}. It suffices to prove that  $Z(A_n) / n\to 1$ almost surely as $n \to \infty$ or, in other terms,
\begin{align*}
& \mathbb P\Big(\sup_{k\ge n}{Z(A_k)-k\over k}>\epsilon\Big)\to0,\qquad \mathbb P\Big(\inf_{k\ge n}{Z(A_k)-k\over k}<-\epsilon\Big)\to0.
 \end{align*}
To check the first convergence, observe that
 \begin{align*}
 \mathbb P\Big\{\sup_{k\ge n}{Z(A_k)-k\over k}>\epsilon\Big\}&=\mathbb P\Big\{\exists k\ge n: Z(A_k)>(1+\epsilon)k\Big\}\\
&=\mathbb P\Big\{\exists k\ge n: T_{(1+\epsilon)k}>A_k\Big\}\\ 
&=\mathbb P\Big\{\exists k\ge n: {T_{(1+\epsilon)k}-A_{(1+\epsilon)k}\over A_k}>1-{A_{(1+\epsilon)k}\over A_k}\Big\}.
\end{align*}
It follows that by condition \eqref{Rxr}, for some $\delta\in(0,1)$ and all $n\ge n_0(\epsilon)$,
 \begin{align*}
 \mathbb P\Big\{\sup_{k\ge n}{Z(A_k)-k\over k}>\epsilon\Big\}&\le
\mathbb P\Big\{\exists k\ge n: {T_{(1+\epsilon)k}-A_{(1+\epsilon)k}\over A_k}>\delta\Big\}\\
&\le
\mathbb P\Big\{\exists k\ge n(1+\epsilon): {T_{k}-A_{k}\over A_{k/(1+\epsilon)}}>\delta\Big\},
\end{align*}
and it just remains to apply Lemma \ref{leT}. The second convergence is verified similarly.\\

\noindent{\sc Proof of part  (iii)}. The statement (iii) is an easy corollary of Theorem \ref{l1} (i) in view of Remark 2 in Section \ref{Rgr} and the relation
\[\mathbb P(Z(A_n)>n+k)=\mathbb P\Big\{{T_{n+k}\over A_{n+k}}>{A_{n}\over A_{n+k}}\Big\}.\]

\noindent{\sc Proof of part  (iv)}. The part (iv) immediately follows from Theorem \ref{l1} (iv) and equality
\begin{align*}
 \mathbb P\Big({Z(A_n)-n\over b(n)}> x\Big)&=\mathbb P(T_{n+xb(n)}> A_n)\\
 &=\mathbb P\Big({T_{n+xb(n)}-A_{n+xb(n)}\over B_{n+xb(n)}}> {A_n-A_{n+xb(n)}\over B_{n+xb(n)}}\Big).
\end{align*}

 \section{Proof of Lemma \ref{IJx}}\label{prl}

Put
  \begin{equation}
    \Lambda(u) =  -\int_{1}^{\infty} \log(1-(\beta -1)uy^{-\beta })dy,\qquad u\le1/(\beta-1),
  \end{equation}
then $\tau(x)$ satisfies $\Lambda'(\tau(x))=x$. This yields
\[\tau'(x)=1/\Lambda''(\tau(x)).\]
Integration by parts gives
\begin{align*}
  \Lambda(\tau(x)) &= - \int_1^\infty \ln(1-(\beta -1) \tau(x) y^{-\beta})dy 
  = \ln(1-(\beta -1) \tau(x)) + \beta x \tau(x).
\end{align*}
Thus  the defining expression for $I(x)$ can be rewritten as 
\[I(x)=x\tau(x)-\Lambda(\tau(x)),\quad x>0.\]
It follows that $I'(x)=\tau(x)$ and $I''(x)=\tau'(x)=1/\Lambda''(\tau(x))$. In view of
\begin{align*}
  \Lambda''(u) &= \int_1^\infty { dy \over ((\beta -1)^{-1}y^\beta -u)^2}>0,
\end{align*}
we conclude that $I(x)$ is a convex function with a minimal value $I(1)=0$.

On the other hand, 
$J(x)= xI(x^{\beta-1})$  is also a convex function with minimal value $J(1)=0$.
Indeed,
$$J'(x)=(\beta-1)x^{\beta-1}\tau(x^{\beta-1})+I(x^{\beta-1})=R(x^{\beta-1}),$$
where $R(x)=(\beta-1)x\tau(x)+I(x)$. In particular,  $J'(1)=R(1)=0$. To verify that $R'(x)>0$, observe that 
 \[R'(x)=(\beta-1)\tau(x)+(\beta-1)x\tau'(x)+\tau(x)=\beta\tau(x)+(\beta-1)x\tau'(x).\]
We have $R'(x)=\tau'(x)r(\tau(x))$, where $\tau'(x)>0$ and
\begin{align*}
 r(u)&=\beta u\Lambda''(u)+(\beta-1)\Lambda'(u)= \int_1^\infty {\beta u dy \over ((\beta -1)^{-1}y^\beta -u)^2}+\int_1^\infty {(\beta-1) dy \over (\beta -1)^{-1}y^\beta -u}\\
& = \int_1^\infty {(y^\beta +u )dy \over ((\beta -1)^{-1}y^\beta -u)^2}.
\end{align*}
Clearly, $r(u)>0$ for $u\ge-1$, and it remains to show that $r(-u)>0$ for $u>1$.  To see this, observe that in view of
\begin{align*}
r(-u)=  \int_1^\infty {(y^\beta -u )dy \over ((\beta -1)^{-1}y^\beta +u)^2}
  = u^{1/\beta-1} \int_{u^{-1/\beta}}^\infty {(y^\beta -1 )dy \over ((\beta -1)^{-1}y^\beta +1)^2},
\end{align*}
we have
\begin{align*}
r(-u)>  u^{1/\beta-1}(\beta-1)^2\int_{0}^\infty {(y^\beta -1 )dy \over (y^\beta +\beta -1)^2}=0,
\end{align*}
provided $ \beta>1$.

Turning to the stated asymptotics as $x\to \infty$, put $h = 1 - (\beta -1)u$, $z={y^\beta -1 \over h}$ and write
\begin{align*}
  \Lambda''(u) &= \int_1^\infty { dy \over ((\beta -1)^{-1}y^\beta -u)^2} = { (\beta -1)^2 \over h^2}  \int_1^\infty { dy \over ({y^\beta -1 \over h} + 1)^2} = { (\beta -1)^2 \over \beta h}  \int_0^\infty { (1+zh)^{1-1/\beta} \over (z + 1)^2} dz.
\end{align*}
This yields
\begin{align*}
  \Lambda''(u) \sim { (\beta -1)^2 \over \beta (1 - (\beta -1)u)},\quad u\to(1-\beta)^{-1}.
\end{align*}
Therefore, using L'Hospital's rule we find 
\begin{align*}
  -x^ {-1} \ln(1-(\beta -1) \tau(x)){\sim } {(\beta -1) \tau'(x)\over 1 - (\beta -1)\tau(x)}={ \beta - 1 \over \Lambda''(\tau(x)) (1 - (\beta -1)\tau(x))} \to { \beta  \over \beta -1},\quad x\to\infty.
\end{align*}
This implies
\begin{align*}
  x^ {-1} I(x)   &= - (\beta -1)\tau(x)   -x^ {-1} \ln(1-(\beta -1) \tau(x))\to -1+ { \beta  \over \beta -1}= { 1  \over \beta -1}.
  \end{align*}

The last assertion of the lemma gives an asymptotics as $x\to 0$. We prove it by first noticing that as $u\to\infty$,
  \begin{align*}
    \Lambda'(-u) 
    &=u^{1/\beta-1} \int_{u^{-1/\beta}}^{\infty}    {dy\over(\beta -1)^{-1}y^{\beta }+1} \\
   &  = u^{1/\beta-1} \int_{0}^{\infty}    {dy\over(\beta -1)^{-1}y^{\beta }+1}-u^{1/\beta-1} \int_0^{u^{-1/\beta}}   {dy\over(\beta -1)^{-1}y^{\beta }+1} \\
   &= u^{1/\beta-1} (\beta -1)^{1/\beta}  {\pi/\beta\over\sin(\pi/\beta)}-u^{-1}+o(u^{-1}),
  \end{align*}
and therefore, as $x\to 0$,
\[   x= |\tau(x)|^{1/\beta-1} (\beta -1)^{1/\beta}  {\pi/\beta\over\sin(\pi/\beta)}-|\tau(x)|^{-1}+o(|\tau(x)|^{-1}).
\]
Solving the last equation, we get as a first approximation
 \begin{equation*}
\tau(x)\sim -b(\beta) x^{-{\beta\over\beta-1}},\quad b(\beta) :=     (\beta -1)^{1\over\beta-1}  \Big({\pi/\beta\over\sin(\pi/\beta)}\Big)^{\beta\over\beta-1}=c(\beta)/(\beta-1),
  \end{equation*}
  and then more exactly
  \begin{equation*}
\tau(x)
= -b(\beta) x^{-{\beta\over\beta-1}}+x^{-1}+o(x^{-1}).
  \end{equation*}
 Thus
  \[ I(x)=-(\beta-1)x\tau(x)- \ln(1-(\beta -1) \tau(x)) = c(\beta) x^{-{1\over\beta-1}} - {\beta\over\beta-1}\ln x^{-1} -\ln c(\beta) -\beta +o(1).
\]
and as stated
$$J(x) = c(\beta) - ( \beta \ln x +\ln c(\beta) + \beta )x +o(x).$$

\section{Proof of Theorem \ref{LDT}}\label{pr5}
Here we prove only the statement (i), the statement (ii) is proved similarly. Our proof of (i) is more direct than that of \cite{DPS} and uses the classical Cramer's device of 'tilted distributions'.
 
Let $x>1$. The required upper bound for (i) is obtained from 
    \begin{align*}
      \mathbb{P}(T_n > xA_n) &= \mathbb{P}(e^{\tau(x) n A_n^{-1} T_n }>e^{x\tau(x)n} )\le  \mathbb{E}e^{\tau(x) n A_n^{-1} T_n }e^{-x\tau(x) n}.    \end{align*}
Indeed, using \eqref{mgf}  we find
    \begin{align*}
 n^{-1}  \log   \mathbb{P}(T_n > xA_n) &\le   - {1 \over n}\sum_{i=n}^{\infty} \log\Big(1-{\tau(x)\over \lambda_iA_n n^{-1}}\Big)-x\tau(x),   
  \end{align*}
and it remains to see that by the dominated convergence theorem, 
    \begin{equation*}
      - {1 \over n}\sum_{i=n}^{\infty} \log\Big(1-{u \over \lambda_iA_n n^{-1}}\Big)= \int_1^{\infty} \log\Big(1-
      {u \over \lambda_{ yn } A_n n^{-1}}\Big)dy\to\Lambda(u).
    \end{equation*}
    Here the dominating function is found from the uniform bounds
    $$ (1-\epsilon)y^{\beta-\epsilon}\le{ \lambda_{yn}\over  \lambda_n}\le (1+\epsilon)y^{\beta+\epsilon},\qquad n\ge n_0(\epsilon),\qquad y\in[1,\infty),$$
where $n_0(\epsilon)$ does not depend on $y$.
  
  The lower bound for (i) is derived using the so-called tilted distributions. For the random variables $X_i$ with exponential distributions 
  Exp$(\lambda_i)$ we introduce tilted versions $\tilde X_{i,n}$ with exponential distributions Exp$(\tilde \lambda_{i,n})$, where 
  \[\tilde \lambda_{i,n}=\lambda_i-\tau(x) nA_n^{-1},\qquad i\ge n.\]
  The parameters $(\tilde \lambda_{i,n})_{i\ge n}$ are positive for all sufficiently large $n$.
  Let $F_n(y)$ and $\tilde F_n(y)$ be the distributions functions for $T_n=\sum_{i=n+1}^\infty X_i$ and  $\tilde T_n=\sum_{i=n+1}^\infty \tilde X_{i,n}$ respectively.
We have
  \begin{align*}
    \int_{-\infty}^{\infty} e^{uy}d\widetilde{F}_n(y) &= \mathbb{E}e^{u\tilde{T}_n}=\prod_{i=n+1}^\infty {\tilde \lambda_{i,n}\over \tilde \lambda_{i,n}-u}\\
    &= {\mathbb{E}e^{(u+\tau(x)n/A_n) T_n} \over \mathbb{E}e^{(\tau(x)n/A_n)T_n}}= {1 \over \mathbb{E}e^{\tau(x)nT_n/A_n}} \int_{-\infty}^{\infty} e^{(u+\tau(x)n/A_n)y}dF_n(y) 
  \end{align*}
  implying
  \begin{equation*}
    d\widetilde{F}_n(y) = { e^{\tau(x)n/A_n y} \over \mathbb{E}e^{\tau(x)n T_n/A_n}}dF_n(y).
  \end{equation*}
  Thus, for any $b>x$, we get
  \begin{align*}
    \mathbb{P}(T_n > x A_n) &= \int_{xA_n}^{\infty} dF_n(y) \ge \mathbb{E}[e^{\tau(x)n/A_n T_n}] e^{-\tau(x)n b } \int_{xA_n}^{bA_n} d\widetilde{F}_n(y) \\
    &= \mathbb{E}[e^{{\tau(x)n \over A_n} T_n}] e^{-\tau(x)n b} \int_{xA_n}^{bA_n} d\widetilde{F}_n(y).
  \end{align*}
  
  By the dominated convergence theorem,
    \begin{align*}
  A_n^{-1}  \mathbb{E}\widetilde{T}_n &= n^{-1}\sum_{i=n+1}^{\infty} { 1 \over \lambda_i A_n n^{-1}- \tau(x) } 
  = \int_1^{\infty}  { dy \over \lambda_{y(n+1)} A_n n^{-1}- \tau(x) }\\
  & \to  \int_1^{\infty} {1 \over y^\beta (\beta - 1)^{-1}- \tau(x) }dy =\Lambda'(\tau(x))=x.
  \end{align*}
Similarly,
    \begin{align*}
      {n  \over A_n^2}\widetilde{B}^2_n &\to \int_1^{\infty} { 1 \over (y^\beta (\beta-1)^{-1} - \tau(x) )^2}dy =  \Lambda''(\tau(x)),
    \end{align*}
    and
    \begin{align*}
      {n^2  \over A_n^3}\widetilde{C}^3_n &\to \int_1^{\infty} { 1 \over (y^\beta (\beta-1)^{-1} - \tau(x) )^3}dy =  {1\over 2}\Lambda'''(\tau(x)).
    \end{align*}
  We see that $\tilde C_n=o(\tilde B_n)$ so that the distribution of ${\sqrt n(\widetilde{T}_n-x A_n)\over A_n}$ is approximately normal with zero mean and variance $\Lambda''(\tau(x))$. Thus
  \[ \int_{xA_n}^{bA_n} d\widetilde{F}^n(y)\to1/2,\]
  and we get
  \begin{align*}
      \liminf_{n\to\infty}    n^{-1} \log\mathbb{P}(T_n > xA_n)&\ge 
           \Lambda(\tau(x))- b\tau(x).
    \end{align*}
    To finish the proof of  the first part of (i) we send $b\to x$.
    
    Turning to the second part of (i) it suffices to observe that as $n\to\infty$
 \begin{equation*}
n^{-1} \log  \mathbb{P}(Z(A_n) >nx)\sim x   (nx)^{-1} \log  \mathbb{P}(T_{nx}  > x^{\beta-1} A_{nx})\to xI(x^{\beta-1}).
  \end{equation*}


\begin{thebibliography}{99}
  
\bibitem{BMR}  Bansaye, V., M\'el\'eard, S., and Richard, M.: \emph{Speed of coming down from infinity for birth and death processes}, arXiv:1504.08160

\bibitem{BBL}  Berestycki, J.,  Berestycki, N., Limic, V.: \emph{The $\Lambda$-coalescent speed of coming down from infinity.} Ann. Probab. \textbf{38} (2010) 207--233.
  
\bibitem{BGT} Bingham, N. H., Goldie, C. M., and Teugels, J. L.: \emph{Regular Variation}. Encyclopedia of mathematics and its Applications. Cambridge University Press, Cambridge, 1987.

\bibitem{DPS} Depperschmidt, A., Pfaffelhuber, P., and Scheuringer, A.: \emph{Some large deviations in Kingman's coalescent}. Electron. Commun. Probab. \textbf{20} (2015) 1-14.

 \bibitem{King} Kingman, J. F. C.: \emph{The coalescent}. Stochastic Process. Appl. \textbf{13} (1982) 235-248.
  
\bibitem{K} Klesov, O. I.: \emph{Rate of convergence of series of random variables}. Ukrainian Math. Journal. \textbf{35} (1983) 309-314.

\bibitem{Pa} Pakes, A. G.: \emph{Divergence rates for explosive birth processes}. Stochastic Process. Appl. \textbf{41} (1992) 91-99.
  
\bibitem{SS}  Sagitov, S.: \emph{On an explosive branching process}. Theory Probab. Appl. \textbf{40} (1996) 575-577.

\bibitem{WW}  Waugh, W. A. O'N.: \emph{Modes of Growth of Counting Processes with Increasing Arrival Rates}. Journal Appl. Probab. \textbf{11} (1974) 237-247.

\end{thebibliography}
\end{document}